\newtheorem {theorem}{Theorem}
\newtheorem {lemma}{Lemma}
\newtheorem {corollary}{Corollary}
\newcommand{\C}{\mathbb C}
\newcommand{\CC}{\widehat{\C}}
\begin{document}

\title[Holomorphic Motions]{Winding Numbers And Full Extendibility in Holomorphic Motions}
 
\author{Yunping Jiang}
\address[Jiang]{Department of Mathematics\\
 Queens College of the City University of New York\\
 Flushing, NY 11367-1597\\
 and\\
 Department of Mathematics\\
 Graduate Center of the City University of New York;
 New York, NY 10016}
 \email[Jiang]{yunping.jiang@qc.cuny.edu}
 
 \keywords{Holomorphic motions, fully extendable, connected one-dimensional hyperbolic complex manifold, the zero winding number condition}
 
\subjclass[2000]{Primary 32G15, Secondary 30C99, 37F30.}
\thanks{ 
This material is based upon work supported by the National Science Foundation. It is also partially supported by a collaboration grant from the Simons Foundation (grant number 523341) and PSC-CUNY awards.
}

\begin{abstract}
We construct an example of a holomorphic motion of a five-point subset of the Riemann sphere over an annulus such that it satisfies the zero winding number condition but is not fully extendable. 
\end{abstract}

\maketitle

\section{Introduction}

Suppose $\C$ is the complex plane and $\CC=\C\cup\{\infty\}$ is the Riemann sphere. Let
$$
\Delta=\{c\in \C\;|\; |c|<1\}
$$
be the open unit disk.  
Suppose $X$ is a connected complex manifold with a basepoint $t_{0}$. Let $E\subset \CC$ be a subset.
A map $\phi (t,z): X\times E\to \CC$ is called a holomorphic motion of $E$ over $X$ if 
\begin{itemize}
\item[i)] $\phi(t_{0}, z)=z$ for all $z\in E$;
\item[ii)]  for any fixed $t\in X$, $\phi_{t}(\cdot)= \phi (t, \cdot): E\to \CC$ is injective;
\item[iii)] for any fixed $z\in E$, $\phi^{z} (\cdot) =\phi (\cdot, z): X\to \CC$ is holomorphic.
\end{itemize} 
By pre- and post-compositing M\"obius transformations, without loss of generality, 
we can always assume that $E$ contains $0$, $1$, and $\infty$ and $\phi$ is normalized, 
that is, $\phi (t, z)=z$ for $z=0, 1$, and $\infty$ and all $t\in X$.  
Henceforth, all holomorphic motions in this paper are normalized. 

We say a holomorphic motion $\phi$ of $E$ over $X$ is {\em fully extendable} if there is 
a holomorphic motion $\psi$ of $\CC$ over $X$ such that the restriction
$$
\psi |X\times E=\phi.
$$
We call $\psi$ a full extension of $\phi$. 

Slodkowski's Theorem (see~\cite{Sl,GJW}) says that any holomorphic motion of $E$ over $\Delta$ is fully extendable. 
There is an example of a holomorphic motion of $E$ over a non-simply connected one-dimensional hyperbolic complex manifold $X$, which is not fully extendable  (see, for example,~\cite{E}). This leads to consider the following {\em zero winding number condition} in holomorphic motions of $E$ over a one-dimensional hyperbolic complex manifold $X$ as follows.

Suppose $\alpha$ is a closed curve in $\C$ not containing $0$. Then we can consider its winding number 
\begin{equation}~\label{windnum}
\eta= \eta (\alpha)= \frac{1}{2\pi} \oint_{\alpha}  d \arg \alpha.
\end{equation}
The winding number is an integer and $2\pi \eta$ is just the variation of argument on $\alpha$.
Given a holomorphic motion $\phi$ of $E$ over a one-dimensional hyperbolic complex manifold $X$ and given a simple closed curve $\beta$ in $X$ and a pair of points $z_{1}\not= z_{2}\in E$, we have the winding number $\eta(\alpha)$ of the closed curve 
$$
\alpha=\alpha (\beta, z_{1}, z_{2})=\phi (\beta, z_{1}) -\phi (\beta, z_{2})\subset \CC. 
$$
We say $\phi$ satisfies {\em the zero winding number condition} if $\eta (\alpha)=0$ for every simple closed curve $\beta$ in $X$ and every pair of points $z_{1}\not=z_{2}\in E$. 

Chirka gave a wonderful new proof of Slodkowski's Theorem in~\cite{Ch}. Further, he claimed in~\cite{Ch} that the zero winding number condition is a necessary and sufficient condition for a fully extendable holomorphic motion of $E$ over any connected one-dimensional hyperbolic complex manifold $X$. Since then, the following problem has been arisen in the study of holomorphic motions: Is the zero winding number condition a sufficient condition for a fully extendable holomorphic motion of $E$ over a connected one-dimensional hyperbolic complex manifold $X$? The purpose of this paper is to show that the zero winding number condition is not sufficient. 

\section{The Main Result}

Consider two arcs 
$$
T^{+}=\big\{ t=x+yi \in \C\;|\; |t+i/2|=\sqrt{5}/2, y\geq 0\big\}
$$ 
and
$$
T^{-} =\Big\{ \frac{1}{t} \;|\; t\in T^{+}\Big\}.
$$
Then $T=T^{+}\cup T^{-}$ is a simple closed curve such that $-1,1\in T$ (see Figure 1). It cuts $\C$ into two domains, one is bounded and simply connected domain $D$ and the other is unbounded and connected domain $\widetilde{D}$.
Then we have $0, -i\in D$ and $i\in \widetilde{D}$ (see Figure 1). 
Consider an annulus
$$
X =\{ t\in \C\;|\; dist (t, T) <\epsilon\}, \quad \epsilon >0,
$$
where $dist (t, T)$ means the distance between $t$ and $T$. Choose $\epsilon$ small enough such that $-2, 0, 1/2, 1/3, i, -i\not\in X$. 
Take $t_{0}=1$ as the basepoint of $X$ (see Figure 1).  

Let
$$
S^{+}=\Big\{ -\frac{1}{t} \;|\; t\in T^{+}\} \quad \hbox{and} \quad S^{-} =\Big\{ -\frac{1}{t} \;|\; t\in T^{-}\Big\}
$$
and $S=S^{+}\cup S^{-}$. It is again a simple closed curve such that $-1,1\in S$ and let
$$
Y=\{ t\in \C\;|\; dist (t, S) <\epsilon\}=\Big\{ -\frac{1}{t} \;|\; t\in X\Big\},
$$ 
be another annulus (see Figure 1).

\begin{figure}[h]
    \includegraphics[width=7in]{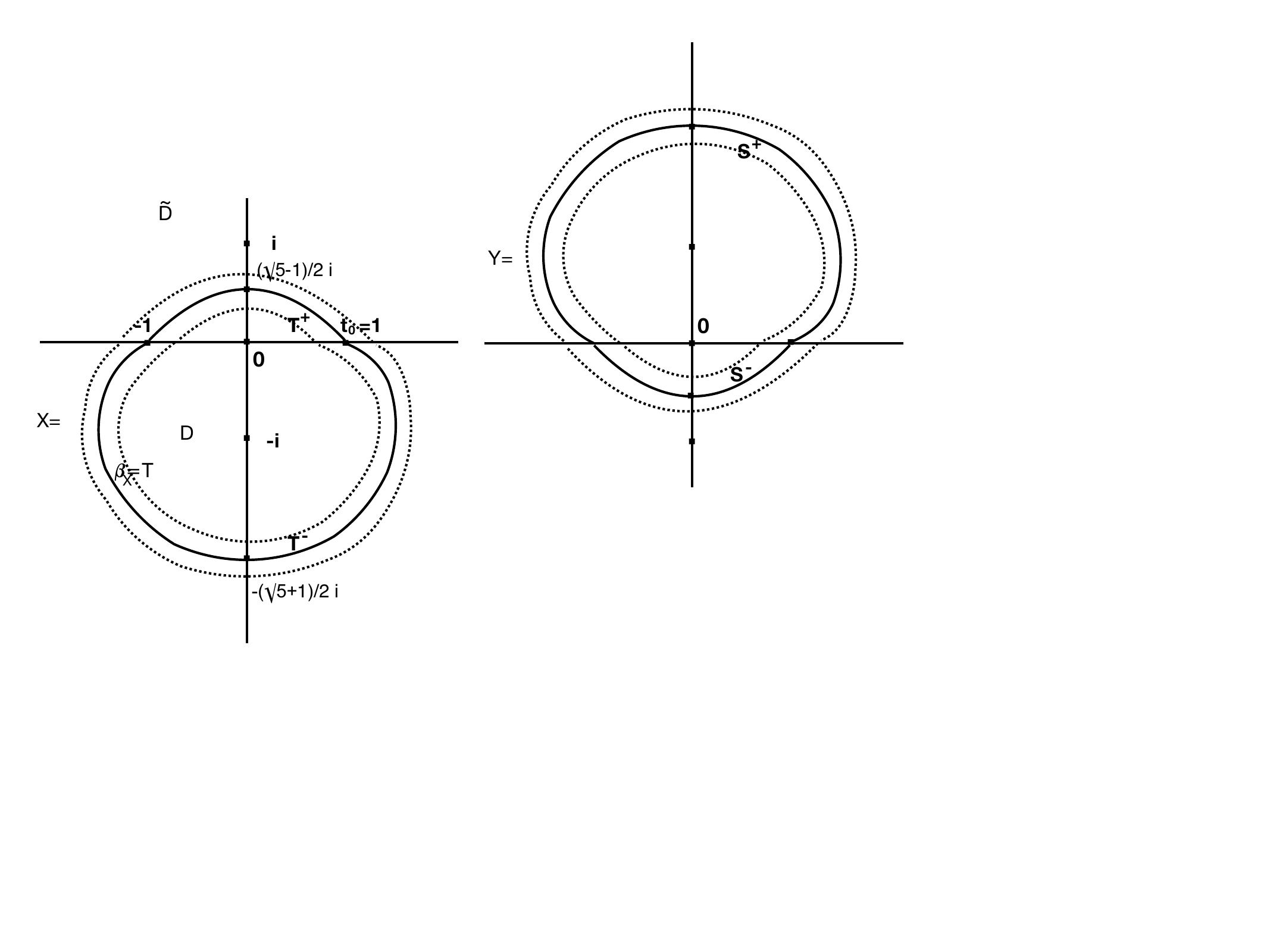}
    \vspace{-1.8in}
  \caption{}
\end{figure}

Let $E=\{ 0, 1, 2, 4, \infty\}\subset \CC$ be a five-point subset in $\CC$. Define
\begin{equation}~\label{exam}
\phi (t, z) =\left\{ \begin{array}{ll} 
                 z& \hbox{if $z=0,1, \infty$ and $t\in X$};\cr
                -\frac{1}{t}+3 & \hbox{if $z=2$ and $t\in X$};\cr
                t +3& \hbox{if $z=4$ and $t\in X$}.                
                \end{array} \right.
\end{equation}
We check that it is a holomorphic motion as follows. 
\begin{itemize}
\item[i)] $\phi (t_{0}, z)=z$ for all $z\in E$; 
\item[ii)] First $-1/t+3$ and $t+3$ will not take $0,1, \infty$ for any $t\in X$ and secondly, since $X$ does not contain $i$ and $-i$, $-1/t\not=t$ for for any $t\in X$. Thus for any fixed $t\in X$, $\phi_{t}(\cdot)=\phi(t,\cdot): E\to \CC$ is injective;
\item[iii)]  Since $X$ does not contain $0$, $-(1/t)+3$ is holomorphic on $X$. Therefore, for any fixed $z\in E$, 
$\phi^{z}(\cdot)=\phi (\cdot, z): X\to \CC$ is holomorphic.  
\end{itemize}
Thus $\phi (t,z)$ is a holomorphic motion of $E$ over $X$.  
The main statement of this paper is 

\medskip
\begin{theorem}[Main Theorem]~\label{main}
The holomorphic motion $\phi$ of $E$ over $X$ defined in (\ref{exam}) satisfies the zero winding number condition but is not fully extendable.
\end{theorem}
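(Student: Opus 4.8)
\medskip

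\noindent\emph{Outline of a proof.} I would establish the two assertions of Theorem~\ref{main} separately.

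\emph{(1) The zero winding number condition.} Since $X$ is an annulus, $\pi_{1}(X)\cong\mathbb Z$ and every non-trivial simple closed curve $\alpha$ in $X$ is freely homotopic to the core circle $|t|=t_{0}$, which $t\mapsto z_{0}t$ carries onto the core circle $T$ of the annulus $A$ of (\ref{ann}). Because $\phi$ is normalized, $\delta(\alpha,z_{1},z_{2})$ is constant whenever $z_{1},z_{2}\in\{0,1,\infty\}$; and the pair $(\xi,\infty)$ reduces to a pair of finite points after post-composing $\phi$ with $z\mapsto 1/z$, using that (\ref{wn}) is invariant under M\"obius post-composition (which follows from the argument principle, since $0\in E$). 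So only the pairs $(\xi,0)$ and $(\xi,1)$ need attention, and both are built from the single moving trajectory $\phi^{\xi}(t)=\xi g(z_{0}t)$. On $A$ the rational map $g$ is holomorphic and omits $0$ and $z_{0}$ (since $0,z_{0}\in D$ while $\overline{D}\cap\overline{g(A)}=\emptyset$), and $z_{0}\alpha$ is freely homotopic to $T$ in $A$; hence by the argument principle the winding number of $\delta(\alpha,\xi,0)$ about $0$ equals the number of zeros of $g$ in $\D$ minus the number of poles of $g$ in $\D$, that is $n-n=0$, and, since $\xi g(w)=1$ is equivalent to $g(w)=z_{0}$, the winding number of $\delta(\alpha,\xi,1)$ about $0$ equals the number of solutions of $g(w)=z_{0}$ in $\D$ minus the number of poles of $g$ in $\D$, which by Lemma~\ref{zeros} is again $n-n=0$. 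Thus $\phi$ satisfies (\ref{wn}).

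\emph{(2) Non-extendability.} Argue by contradiction, playing off Lemma~\ref{pr}. Suppose $\psi$ is a holomorphic motion of $\CC$ over $X$ with $\psi|_{X\times E}=\phi$, and consider the leaf of $\psi$ through the auxiliary point $z_{0}\notin E$, namely $\beta(t):=\psi(t,z_{0})$: it is holomorphic on $X$, $\beta(t_{0})=z_{0}$, and injectivity of each $\psi_{t}$ forces $\beta(t)\notin\{0,1,\infty,\xi g(z_{0}t)\}$ for all $t$. Being a holomorphic motion of the whole sphere, $\psi$ is trivially fully extendable, so by~\cite[Corollary 3]{BJMS} it satisfies (\ref{wn}); applying this to the pairs $(z_{0},0)$, $(z_{0},1)$, $(z_{0},\infty)$ and $(z_{0},\xi)$ forces the winding numbers of $\beta$ about $0$ and $1$, and of $\beta-\phi^{\xi}$ about $0$, all to vanish along $\alpha$. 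Rescaling and reparametrizing, $\nu(w):=z_{0}\,\psi(w/z_{0},z_{0})$ is then a holomorphic map $\nu:A\to Y=\C\setminus\{0,z_{0}\}$ with $\nu(w)\neq g(w)$ for every $w\in A$, whose restriction to $T$ lies in the commutator subgroup of $\pi_{1}(Y)$. The plan is to feed $\nu$, together with the remaining leaves of $\psi$, into the construction of an explicit homotopy inside $Y$ exhibiting $g(\gamma)$ as null-homotopic, which contradicts Lemma~\ref{pr}: concretely I would lift $\psi$ to the universal cover $\widetilde{X}\to X$, extend it there by Slodkowski's theorem, and compare this extension with its translate by the generating deck transformation, the relation $\nu\neq g$ and the vanishing of the above winding numbers being what keeps the interpolating family of loops off the punctures $0$ and $z_{0}$. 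The contradiction shows that $\phi$ is not fully extendable, which answers Problem~\ref{prob} in the negative.

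\emph{Where the difficulty lies.} The delicate step is the very last one: converting the bare existence of the extension $\psi$ into a genuine null-homotopy of $g(\gamma)$ in $\C\setminus\{0,z_{0}\}$. Knowing only that the single trajectory $\beta$ avoids the four curves $0,1,\infty,\phi^{\xi}$ with the prescribed vanishing winding numbers is not enough to contradict Lemma~\ref{pr}; one must genuinely exploit the global structure of $\psi$ as a motion of the entire sphere --- the disjoint holomorphic foliation $\{\psi^{z}\}_{z\in\CC}$ of $X\times\CC$ --- to build the homotopy, and then check with care that it never crosses $0$ or $z_{0}$. Once that is carried out the clash with Lemma~\ref{pr} is immediate, and together with part (1) this proves that $\phi$ satisfies the zero winding number condition but is not fully extendable.
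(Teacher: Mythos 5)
\emph{On part (1).} Your verification of the zero winding number condition is correct and is essentially the paper's argument: constancy of $\delta$ for pairs in $\{0,1,\infty\}$, the argument principle applied to $g$ along $\gamma=z_{0}\alpha=T$ giving $n-n=0$ for the pair $(\xi,0)$, Lemma~\ref{zeros} giving $n-n=0$ for the pair $(\xi,1)$, and the change of coordinate at $\infty$ for the pair $(\xi,\infty)$.

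\emph{On part (2).} Here there is a genuine gap, and you have in fact located it yourself: you never construct the null-homotopy of $g(\gamma)$ in $Y=\C\setminus\{0,z_{0}\}$ that would contradict Lemma~\ref{pr}. Tracking the single leaf $\beta(t)=\psi(t,z_{0})$ and recording that various winding numbers of $\beta$, $\beta-1$, and $\beta-\phi^{\xi}$ vanish gives you homological information about loops in $Y$, but triviality in $\pi_{1}(Y)$ (a free group on two generators) is strictly stronger than the vanishing of winding numbers about the two punctures; your own remark that the restriction of $\nu$ to $T$ ``lies in the commutator subgroup of $\pi_{1}(Y)$'' is exactly the obstruction, not a step toward removing it. The proposed remedy --- lift to the universal cover, extend by Slodkowski, compare with the deck-transformation translate --- is left as a plan, and it is not clear how it produces a homotopy of the \emph{loop} $g(\gamma)$ that stays inside $Y$ for all intermediate times.

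The idea you are missing is the one the paper uses to sidestep all of this: by Mitra's lifting theorem (\cite{M1}, \cite[Theorem 5.10]{JM2}), a fully extendable holomorphic motion of the four-point set $E$ over $X$ factors as $\phi(t,z)=\Phi_{E}(f(t),z)$ through a basepoint-preserving holomorphic map $f\colon X\to T(E)$ into the Teichm\"uller space of $E$, which for a four-point set is a \emph{simply connected} domain in $\C$. One then contracts the loop $f(\alpha)$ to the basepoint inside $T(E)$ (the paper uses $s\mapsto sf(\alpha(\theta))$) and pushes the contraction forward by the single coordinate $\tau\mapsto z_{0}\Phi_{E}(\tau,\xi)$ of the universal motion. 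Injectivity of $\Phi_{E}(\tau,\cdot)$ on $E=\{0,1,\xi,\infty\}$ guarantees that every intermediate loop avoids $0$, $z_{0}$, and $\infty$, so the whole homotopy lives in $Y$ and exhibits $g(\gamma)$ as trivial there, contradicting Lemma~\ref{pr}. It is the simple connectivity of $T(E)$, not any property of an individual leaf of $\psi$, that produces the null-homotopy; without this factorization the contradiction does not materialize, so as written your part (2) does not prove non-extendability.
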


\begin{proof}
We first check the zero winding number condition for $\phi$ in (\ref{exam}). 
The set $E$ contains only five points $0,1, 2, 4$, and $\infty$ and $X$ has only one non-trivial simple closed curve $\beta_{X}=T$ in the sense of homotopy. Thus we need only to consider the winding number $\eta(\alpha)$ of 
$$
\alpha =\alpha (\beta_{X}, z_{1}, z_{2})=\phi (\beta_{X}, z_{1})-\phi (\beta_{X}, z_{2})
$$
for any pair of points $z_{1}\not= z_{2}\in E$. 

When $z_{1}, z_{2}=0, 1, \infty$, it is clear that $\eta (\alpha)=0$. 
For $z_{1}=4$ and $z_{2}=0, 1, \infty$, then we have that $\alpha =\beta_{X} +3$, $\beta_{X} +2$, $1/(\beta_{X}+3)$ (for $z_{2}=\infty$, we use the coordinate $w=1/z$). Since $0$ is in the unbounded component of $\C\setminus \alpha$ in these cases, we have $\eta (\alpha)=0$.
For $z_{1}=2$ and $z_{2}=0, 1, \infty$, then we have that $\alpha =-(1/\beta_{X}) +3$, $-(1/\beta_{X}) +2$, $-\beta_{X}+3$ (for $z_{2}=\infty$, we use the coordinate $w=1/z$). In these cases, since $0$ is in the unbounded component of $\C\setminus \alpha$,  we have $\eta (\alpha)=0$. Symmetrically, when $z_{1}=0, 1, \infty$ and $z_{2}=2, 4$, we have also $\eta (\alpha)=0$.

When $z_{1}=4$ and $z_{2}=2$, 
$$
\alpha = \phi (\beta_{X}, 4)-\phi (\beta_{X}, 2) = \beta_{X} +\frac{1}{\beta_{X}}.
$$
It is the image of the simple closed curve $\beta_{X}$ under the map $f(z) =t+1/t$. The map $f(t)$ is a meromorphic function on the simply connected domain $D$ enclosed by $\beta_{X}$  and non-zero and continuous on $\beta_{X}$ (see Figure 1). It has one zero (counted by multiplicity) at $-i$ and one pole (counted by multiplicity) at $0$ insider $D$. From the argument principle in complex analysis, the winding number 
$$
\eta (\alpha) =\frac{1}{2\pi i} \oint_{\beta_{X}} \frac{f'(z)}{f(z)} dz=1-1=0.
$$
Symmetrically, when $z_{1}=2$ and $z_{2}=4$, we have also $\eta(\alpha)=0$. 
We completed the proof of that $\phi$ satisfies the zero winding number condition.

To prove $\phi$ is not fully extendable, we need to introduce the isotopy rel $E$ among all homeomorphisms $h$ of $\CC$.
Two homeomorphisms $h_{0}$ and $h_{1}$ of $\CC$ are said to be isotopic rel $E$, denoted as $h_{1}\sim_{E} h_{0}$,  if there is a continuous map
$$
H(s, z): [0,1]\times \CC\to \CC
$$
such that 
\begin{itemize}
\item[a)] $H(1,z)= h_{0}^{-1}\circ h_{1} (z)$ and $H(0,z)=z$ for all $z\in \CC$;
\item[b)] $h_{s}(\cdot) =H(s, \cdot)$ is a homeomorphism of $\CC$ for every $s\in [0,1]$;
\item[c)] $H(s, z)=z$ for all $z\in E$ and $s\in [0,1]$.
\end{itemize}
We use $[h]_{E}$ to denote the isotopic class of $h$ rel $E$. We call $h$ isotopic trivial rel $E$ if $[h]_{E}=[Id]_{E}$; otherwise, we call $h$ isotopic non-trivial rel $E$. 

Consider the annulus $A=Y+3$ and the simple closed curve $\beta_{Y} =S+3$ which is homotopic to the core curve of $A$. Let $\tau$ be the Dehn twist on $A$ along the curve $\beta_{Y}$ by rotating $2\pi$ clockwisely. 

\begin{figure}[h]
    \includegraphics[width=10in]{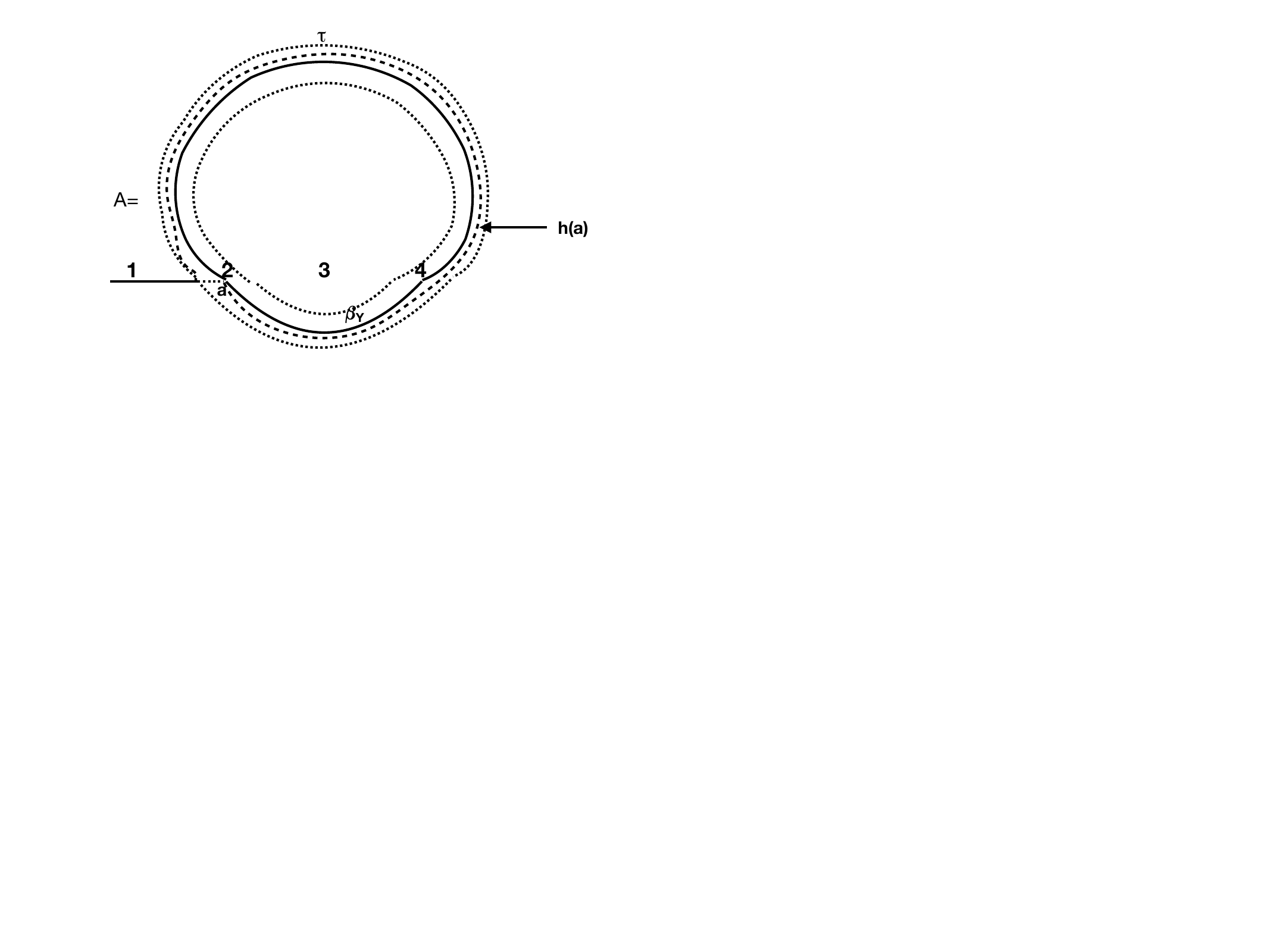}
   \vspace{-4.8in}  
    \caption{}
\end{figure}

\medskip
\begin{lemma}~\label{nh}
Suppose $H$ is a homeomorphism of $\CC$ fixing every point in $E$ such that $H|A$ is $\tau$ in the sense of homotopy (See Figure 2). Then $H$ is isotopic non-trivial rel $E$.
\end{lemma}

\begin{proof}
Consider the Riemann surface $S=\CC\setminus E$. Given two simple closed curves $A$ and $B$ in $S$, 
 the intersection number $I(A, B)$ is by the definition the minimum number among $\#(A'\cap B')$ for all simple closed curves $A'$ homotopic to $A$ in $S$ and all simple closed curves $B'$ homotopic to $B$ in $S$. 
 
 Suppose $H$ is a homeomorphism of $\CC$ fixing every point in $E$.  Suppose $A$ is a simple closed curve in $S$. Let  $B=H(A)$ and consider $I (A, B)$. Then $H$ is isotopic trivial rel $E$ implies that $I(A, B)=0$ for every simple closed curve $A$ in $S$ (refer to~\cite{BC}). We will use this fact in the hyperbolic geometry to show that $H$ in the lemma is isotopic non-trivial rel $E$.    

Let 
$$
A=\{ z\in \C\;|\; \Big| z-\frac{3}{2}\Big|=1\}.
$$
Then $1$ and $2$ are in the bounded component of $\C\setminus A$ and $0$ and $4$ are in the unbounded component of $\C\setminus A$. It is a simple closed curve in $S=\C\setminus E$. Since $H$ restricted to $Y$ is a Dehn twist along $\beta_{Y}$ by rotating $2\pi$ closewisely, $B=h(A)$ is also a simple closed curve in $S$ such that $1$ and $2$ are in the bounded component of $\C\setminus B$ and $0$ and $4$ are in the unbounded component of $\C\setminus B$. (See Figure 3).  However, one can check that the intersection number $I(A, B)=4$. This implies that $H$ must be isotopic non-trivial rel $E$.
\end{proof}

\begin{figure}[h]
    \includegraphics[width=8in]{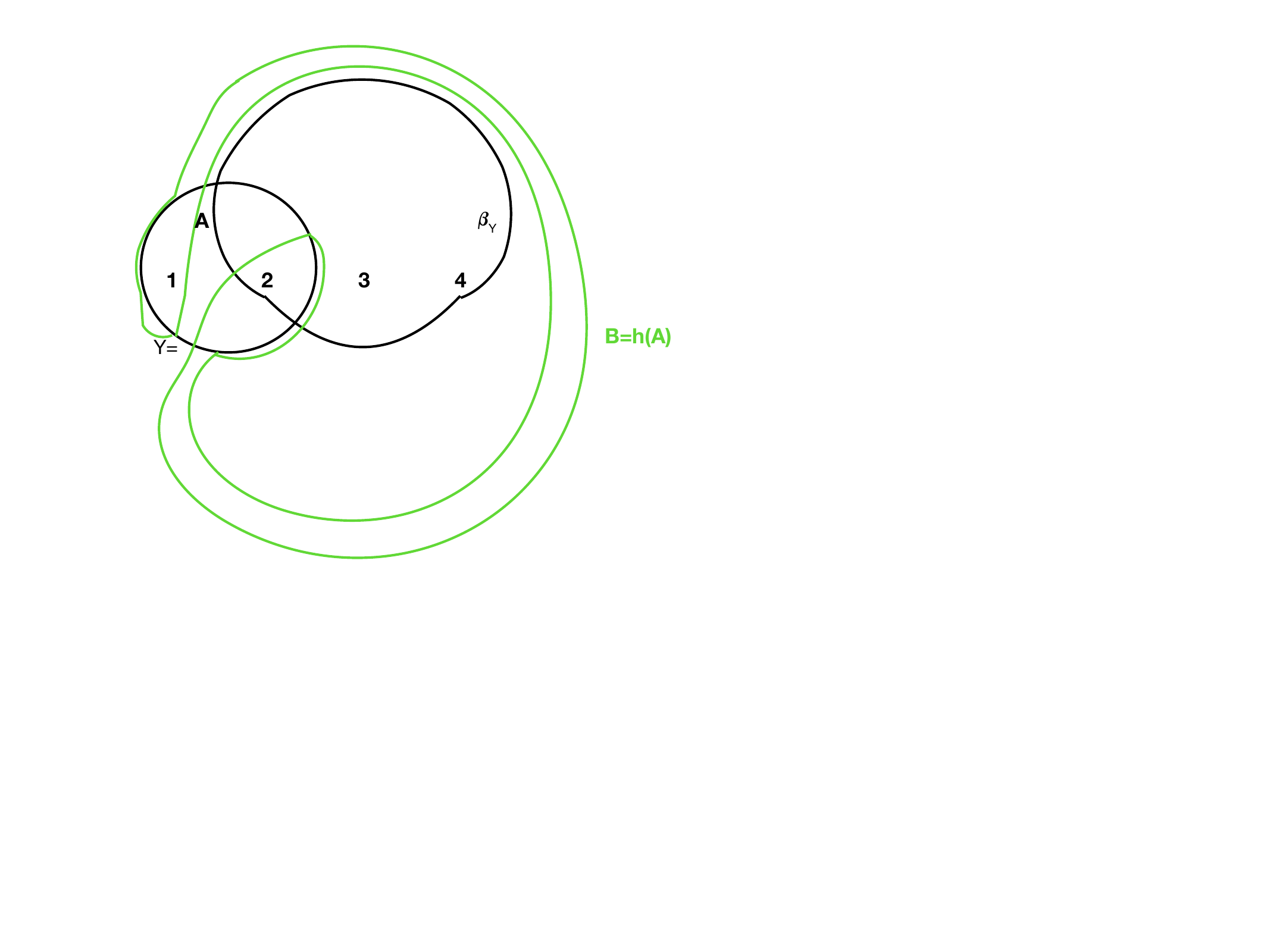}
   \vspace{-2.5in}
  \caption{}
\end{figure}

Let $M(\C)$ be the space of all $L^{\infty}$ complex functions $\mu$ on $\C$ with norms $\|\mu\|_{\infty} <1$, which is a simply connected complex Banach manifold. For every $\mu \in M(\C)$, consider the Beltrami equation
\begin{equation}~\label{be}
w_{\overline{z}} =\mu w_{z}.
\end{equation}
A solution $w$ of (\ref{be}) is called a quasiconformal homeomorphism of $\CC$. If $w(0)=0$, $w(1)=1$, 
and $w(\infty)=\infty$, it is called a normalized solution. 
Here $\mu$ is called the Beltrami coefficient.
The measurable Riemann mapping theorem 
says that for every $\mu\in M(\C)$, the Beltrami equation (\ref{be}) has a unique normalized solution, denoted as $w^{\mu}$. 
Moreover, $w^{\mu}$ depends on $\mu$ holomorphically (refer to~\cite{A,GJW}). 
 Then we can define a normalized holomorphic motion
$$
\Phi (\mu, z) =w^{\mu}(z): M(\C)\times \CC \to \CC.
$$
 
To continue our proof, we consider the fundamental group $\pi_{1} (X) =\mathbb{Z}$, the integer group, 
with the homotopic class $[\beta_{X}]=1$. 
Let $\pi: \Delta\to X$ be the holomorphic universal cover such that $\pi (0)=t_{0}$. 
Let $\Gamma$ be the corresponding group of deck transformations. Let $\gamma$ be the deck transformation 
(a M\"obius transformation on $\Delta$) corresponding to $[\beta_{X}]$, then $\Gamma=<\gamma>$. 
Let $\delta$ be the geodesic connecting $0$ and $\gamma (0)$ in $\Delta$, then
$[\pi (\delta)] =[\beta_{X}]=1$. (See Figure 4).

\begin{figure}[h]
    \includegraphics[width=10in]{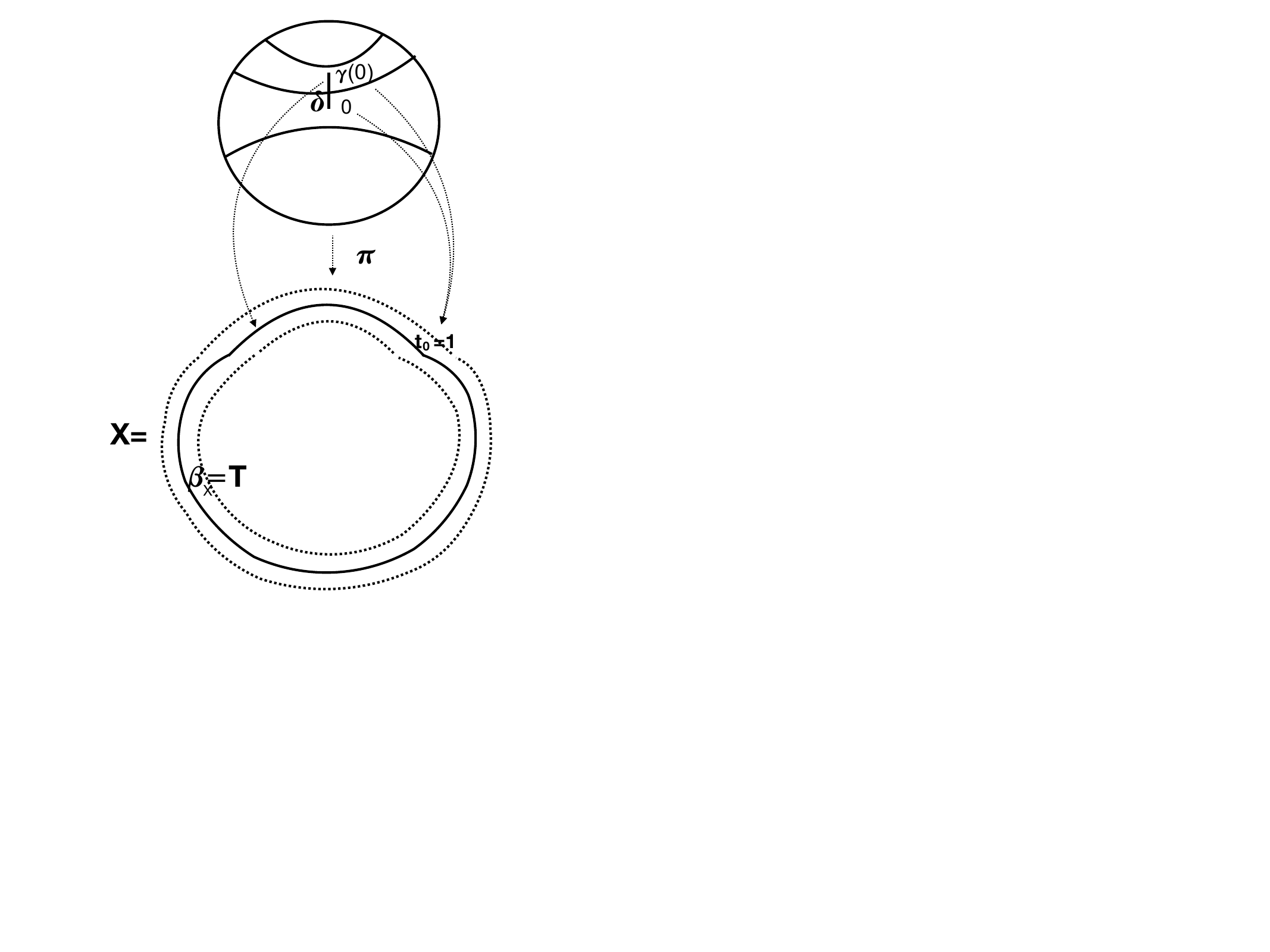}
   \vspace{-3in}
  \caption{}
\end{figure}

Pull back $\phi$ by $\pi$, we have a holomorphic motion 
$$
\widetilde{\phi} (c, z) =\phi (\pi (c), z): \Delta\times E\to \CC,
$$ 
which is fully extendable as we have mentioned in the introduction.
Let $\widetilde{\psi} (c, z): \Delta \times \CC \to \CC$ be a full extension of $\widetilde{\phi}$. For each $c\in \Delta$,
$\widetilde{\psi}_{c} : \CC\to \CC$ is a quasiconformal homeomorphism (refer to~\cite{BR,GJW,M,JM2}). Let $\mu(c)$ be the Beltrami coefficient 
of $\widetilde{\psi}_{c}$. Then 
\begin{equation}~\label{hmap}
F(c)=\mu (c): \Delta \to M(\C), \; F(0)=0,
\end{equation}
defines a holomorphic map (refer to~\cite{BR,GJW,M,JM2}) such that 
\begin{equation}~\label{extension}
\widetilde{\psi} (c, z)= w^{F(c)} (z):  \Delta\times \CC\to \CC
\end{equation} 
is a full extension of $\widetilde{\phi} (c, z): \Delta\times E\to \CC$.

When $c$ moves from $0$ to $\gamma (0)$ along $\delta$ in $\Delta$, $t=\pi (c)$ move the point  
$1$ back to $1$ counter-clockwise on $\beta_{X}$ in $X$ (more precisely, on the unique non-trivial closed geodesic 
homotopic to $\beta_{X}$ in $X$) and  
$$
\widetilde{\psi} (c,2)=\widetilde{\phi} (c, 2) =-\frac{1}{t} +3
$$ 
movers the point $2$ back to the point $2$ clockwisely on 
$\beta_{Y}$ in $Y$. 
Thus we have that

\medskip
\begin{corollary}~\label{con}
The homeomorphism 
$$
H(\cdot) =\widetilde{\psi}(\gamma (0),\cdot)=w^{F(\gamma(0))} (\cdot)$$ 
of $\CC$ fixes every point in $E$ and the restriction $H|A$ is the Dehn twist $\tau$ in the sense of homotopy.
Thus from Lemma~\ref{nh}, $H$ is isotopic non-trivial rel $E$. 
\end{corollary}

We need the next lemma to make sure that the choice of $F$ will not change the isotopic non-trivial property for $H$.   

\medskip
\begin{lemma}~\label{ind}
The isotopic class $[H]_{E}$ does not depend on the choice of any continuous map $F$ in (\ref{hmap}) satisfying (\ref{extension}).
\end{lemma}

\begin{proof} 
Let $F_{1}$ and $F_{2}$ be two maps in (\ref{hmap}) satisfying (\ref{extension}). Parametrize $\delta$ as a path $\delta (s): [0,1]\to \Delta$ from $\gamma (0)$ to $0$. Let
$$
H(s, z)= \Big( w^{F_{1} (\gamma (0))}\Big)^{-1} \circ w^{F_{1}( \delta (s))}\circ \Big( w^{F_{2} (\delta(s))}\Big)^{-1} \circ w^{F_{2} (\gamma (0))} (z) : [0,1]\times \CC\to \CC.
$$   
Then we have that for $s=0$, $\delta (0)=\gamma (0)$, $H(0, z) =z$ for all $z\in \CC$. For $s=1$, $\delta (1)=0$, since $ F_{1}(0)=F_{2}(0)=0$, 
$H(1, z) =(w^{F_{1} (\gamma(0))})^{-1}\circ w^{F_{2}} (\gamma (0))$.  
For $z\in E$ and $s\in [0,1]$, $w^{F_{1}( \delta (s))} (z)=w^{F_{2} (\delta (s))}(z) =\widetilde{\phi} (\delta (s), z)$, we have that $H(s, z)=z$.   
We see that $H(\cdot , \cdot )$ gives an isotopy between $w^{F_{1}( \gamma(0))}$ and $w^{F_{2} (\gamma (0))}$ rel $E$, that is,
$w^{F_{1} (\gamma(0))}\sim_{E} w^{F_{2} (\gamma (0))}$. This completes the proof.
\end{proof}

Now we complete our proof of Theorem~\ref{main} by using the contradiction. That is, we will show that if $\phi$ is fully extendable, then $H(\cdot)=\widetilde{\psi} (\gamma (0), \cdot)$ is isotopic trivial rel $E$. 

Suppose $\phi$ is fully extendable and suppose $\psi (t,z): X\times \CC\to \CC$ is a full extension of $\phi$. For each $t\in X$, $\psi (t,\cdot) : \CC\to \CC$ is a quasiconformal homeomorphism (refer to~\cite{BR,GJW,M,JM2}). Let $\mu (t)$ be the Beltrami coefficient of $\psi (t,\cdot)$. 
Then 
$$
F_{X}(t)=\mu (t): X \to M(\C), \; F_{X}(t_{0})=0,
$$
defines a holomorphic map (refer to~\cite{BR,GJW,M,JM2}) such that 
$$
\psi (t, z)= w^{F_{X}(t)} (z), \; t \in X.
$$
We can now take $F=F_{X}\circ \pi: \Delta\to M(\C)$ as a map in (\ref{hmap}) satisfying (\ref{extension}) such that
$$
H(\cdot)= \widetilde{\psi} (\gamma (0), \cdot) =w^{F(\gamma (0))}(\cdot).
$$ 
This implies that 
$$
[H]_{E}=[w^{F(\gamma (0))}]_{E} =[w^{F_{X} (\pi (\gamma (0)))}]_{E} = [w^{F_{X}(t_{0})}]_{E}=[Id]_{E}.
$$ 
This says that $H$ is isotopic trivial rel $E$. It contracts to Corollary~\ref{con}. The contradiction says that $\phi$ cannot be fully extendable. This completes the proof of Theorem~\ref{main}.
\end{proof}
 
 \medskip
 \medskip 

\bibliographystyle{amsalpha}

\end{document}